\newtheorem{theorem}{Theorem}
\newtheorem{corollary}{Corollary}
\newtheorem{lemma}{Lemma}
\begin{document}

\title{\Large Powerful numbers in $(1^{\ell}+q^{\ell})(2^{\ell}+q^{\ell})\cdots (n^{\ell}+q^{\ell})$}
\author{\large Quan-Hui Yang$^{1}$\footnote{
Emails:~yangquanhui01@163.com,~zhaoqingqing116@163.com.}
~~Qing-Qing Zhao$^{2}$}
\date{} \maketitle
 \vskip -3cm
\begin{center}
\vskip -1cm { \small
1. School of Mathematics and Statistics, Nanjing University of Information \\
Science and Technology, Nanjing 210044, China }
 \end{center}

 \begin{center}
{ \small 2. Wentian College, Hohai University,  Maanshan 243031,
China }
 \end{center}

\begin{abstract} Let $q$ be a positive integer. Recently, Niu and
Liu proved that if $n\ge \max\{q,1198-q\}$, then the product
$(1^3+q^3)(2^3+q^3)\cdots (n^3+q^3)$ is not a powerful number. In
this note, we prove that (i) for any odd prime power $\ell$ and
$n\ge \max\{q,11-q\}$, the product
$(1^{\ell}+q^{\ell})(2^{\ell}+q^{\ell})\cdots (n^{\ell}+q^{\ell})$
is not a powerful number; (2) for any positive odd integer $\ell$,
there exists an integer $N_{q,\ell}$ such that for any positive
integer $n\ge N_{q,\ell}$, the product
$(1^{\ell}+q^{\ell})(2^{\ell}+q^{\ell})\cdots (n^{\ell}+q^{\ell})$
is not a powerful number.

{\it 2010 Mathematics Subject Classification:} Primary 11A25.

{\it Keywords and phrases:} shifted power, powerful number,
$p$-adic valuation,  Dirichlet's theorem

\end{abstract}

\section{Introduction}

A positive integer $t$ is called a powerful number, if $t>1$ and
$p^2\mid t$ for every prime divisor $p$ of $t$ (see
\cite{Golomb}). In 2008, Cilleruelo \cite{Cilleruelo} proved that,
for any integer $n>3$, the product $(1^2+1)(2^2+1)\cdots (n^2+1)$
is not a square. Amdeberhan, Medina and Moll \cite{Amdeberhan}
claimed that if $n>12$ and $\ell$ is an odd prime, then
$(1^{\ell}+1)(2^{\ell}+1)\cdots (n^{\ell}+1)$ is not a square.
G\"{u}rel and Kisisel \cite{GurelJNT} confirmed the claim for
$\ell=3$, while Zhang and Wang \cite{zhang} confirmed the claim
for any prime $\ell\ge 5$. In fact, they proved that
$(1^{\ell}+1)(2^{\ell}+1)\cdots (n^{\ell}+1)$ is not a powerful
number. Later, Chen et al. \cite{chen2,{chen}} proved that if
$\ell$ is an odd integer with at most two distinct prime factors,
then $(1^{\ell}+1)(2^{\ell}+1)\cdots (n^{\ell}+1)$ is not a
powerful number. There are many related results on this topic, one
can refer to
\cite{CillerueloLuca,Fang,GurelMonth,Gurel,Hong,Spiegelhalter,Yang,zhangFu}.

Recently, Niu and Liu \cite{Niu} extended the work of G\"{u}rel
and Kisisel and proved that the following theorem.

\noindent{\bf Theorem A.} {\em For any positive integers $q$ and
$n\ge \max\{q,1198-q\}$, the product $(1^3+q^3)(2^3+q^3)\cdots
(n^3+q^3)$ is not a powerful number.}

In this paper, we generalize the results of Niu and Liu in the
following theorem.

\begin{theorem}\label{mainthm} Let $q$ be a positive integer and $\ell$
be an odd prime power. For any integer $n\ge \max\{q,11-q\}$, the
product $(1^{\ell}+q^{\ell})(2^{\ell}+q^{\ell})\cdots
(n^{\ell}+q^{\ell})$ is not a powerful number.
\end{theorem}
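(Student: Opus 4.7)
The plan is to exhibit a single prime $p$ whose $p$-adic valuation in $P_n:=\prod_{k=1}^{n}(k^{\ell}+q^{\ell})$ equals $1$; this alone forces $P_n$ not to be a powerful number. Write $\ell=\ell_0^{s}$ with $\ell_0$ an odd prime and $s\ge 1$, and set $M:=n+q$, so that the hypothesis $n\ge\max\{q,11-q\}$ supplies $n\ge q$ and $M\ge 11$.

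The prime I would look for satisfies
\[
\tfrac{1}{2}M<p\le M,\qquad p\neq \ell_0,\qquad \ell_0\nmid p-1.
\]
The last condition gives $\gcd(\ell,p-1)=1$, so raising to the $\ell$th power is a bijection on $(\mathbb{Z}/p\mathbb{Z})^{*}$, and the congruence $k^{\ell}\equiv -q^{\ell}\pmod{p}$ admits the single solution $k\equiv -q\pmod{p}$. Because $p>M/2\ge q$, the unique representative of this class in $\{1,\ldots,n\}$ is $k_0:=p-q$---any other would exceed $2p-q>n$---so $p$ divides exactly one factor of $P_n$. The Lifting-the-Exponent lemma applies ($p$ is odd, $p\mid k_0+q=p$, $p\nmid k_0 q$ because $p>q$, and $\ell$ is odd), giving
\[
v_p(P_n)=v_p\bigl(k_0^{\ell}+q^{\ell}\bigr)=v_p(k_0+q)+v_p(\ell)=1+0=1,
\]
where $v_p(\ell)=0$ follows from $p\neq\ell_0$.

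The real work, and what I expect to be the main obstacle, is establishing that a prime meeting the three conditions above exists for \emph{every} pair $(M,\ell_0)$ with $M\ge 11$ and $\ell_0$ an odd prime. If $\ell_0>M$ the two extra conditions are vacuous and Bertrand's postulate alone supplies the prime. If $\ell_0\le M$ the substantive requirement is $p\not\equiv 1\pmod{\ell_0}$; I would verify the finitely many small cases explicitly (for instance $M=11$ reduces to the set $\{7,11\}$, and one of these suffices for each admissible $\ell_0\in\{3,5,7,11\}$), and for large $M$ invoke a quantitative form of Dirichlet's theorem on primes in arithmetic progressions---together with an effective short-interval estimate such as Brun--Titchmarsh---to secure an admissible prime in $(M/2,M]$. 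Making the transition between the finite-check regime and the asymptotic regime uniform in $\ell_0$ is the delicate step, and is the reason the explicit bound $11$ (rather than a still smaller constant) appears in the hypothesis.
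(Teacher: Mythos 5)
Your reduction is sound and coincides with the paper's: both arguments hinge on producing a prime $p$ with $\tfrac{n+q}{2}<p\le n+q$, $p\ne\ell_0$ and $\ell_0\nmid p-1$, after which the bijectivity of the $\ell$-th power map modulo $p$ (the paper's Lemma \ref{lem1} and Corollary \ref{cor1}) and the Lifting-the-Exponent computation give $\nu_p(P_n)=1$ exactly as you write. That half of your proposal is complete and correct.

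The genuine gap is that you never prove the existence of such a prime --- which you yourself identify as ``the real work'' --- and the route you sketch for it would not deliver the theorem as stated. A quantitative Dirichlet theorem combined with Brun--Titchmarsh cannot give a nonempty set of primes $p\not\equiv 1\pmod{\ell_0}$ in $(M/2,M]$ uniformly over all moduli $\ell_0\le M$ with an explicit threshold as small as $M\ge 11$: effective lower bounds for primes in progressions are useless when the modulus is comparable to $M$, and Brun--Titchmarsh only bounds the progression $1\pmod{\ell_0}$ from above with a constant too weak to beat $|P(M-1)|\ge 2$ in general. The paper avoids analytic machinery entirely. For $\ell_0=3$ it quotes a known elementary fact (Lemma \ref{lem5}). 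For $\ell_0\ge 5$ it splits on the size of $n+q$ relative to $\ell_0$: if $n+q\le 2\ell_0$ there are \emph{no} primes $\equiv 1\pmod{\ell_0}$ in the interval at all; if $2\ell_0<n+q\le 4\ell_0$ there is at most one, since two such primes $p_1<p_2$ would force $p_1\ge 2\ell_0+1$ and $p_2\ge 4\ell_0+1$ ($\ell_0+1$ and $3\ell_0+1$ being even), contradicting $p_2\le n+q$; and if $n+q>4\ell_0$ it invokes the combinatorial inequality $|P(m)|>|P(m;k,1)|$ for $m\ge 4k$ (Lemma \ref{lem4}, from the Zhang--Wang paper). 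Combined with $|P(n+q-1)|\ge 2$ for $n+q-1\ge 10$ (Lemma \ref{lem3}), this yields the desired prime with the explicit bound $11$. Without this (or some equally concrete substitute), your argument establishes only the conditional statement, not the theorem.
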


The next theorem is a generalization of Theorem 2 in \cite{chen}.

\begin{theorem}\label{mainthm2} For any positive integer $q$ and odd positive integer $\ell$, there exists an integer
$N_{q,\ell}$ such that for any positive integer $n\ge N_{q,\ell}$,
the product $(1^{\ell}+q^{\ell})(2^{\ell}+q^{\ell})\cdots
(n^{\ell}+q^{\ell})$ is not a powerful number.
\end{theorem}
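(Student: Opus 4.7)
The plan is to produce a prime $p$ whose $p$-adic valuation in the product equals $1$, which directly contradicts the product being powerful. Since $\ell$ is odd, we have the factorization
\[
k^{\ell} + q^{\ell} = (k+q)\,\Phi_\ell(k,q), \qquad \Phi_\ell(k,q) := \sum_{i=0}^{\ell-1}(-1)^{i} k^{\ell-1-i} q^{i},
\]
so I will look for a prime $p$ and an index $k \in [1,n]$ with $k+q = p$, with $k$ the unique element of $[1,n]$ divisible by $p$ in the product, and with $p \nmid \Phi_\ell(k,q)$.

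I choose $p$ to satisfy three conditions: (a) $p \equiv 2 \pmod{\ell}$, which forces $\gcd(\ell, p-1)=1$ because for every odd prime $r \mid \ell$ one has $p \equiv 2 \pmod r$ and hence $r \nmid p-1$; consequently $x \mapsto x^{\ell}$ is a bijection on $(\mathbb{Z}/p\mathbb{Z})^{\ast}$ and the congruence $j^{\ell} \equiv -q^{\ell} \pmod p$ has the unique solution $j \equiv -q \pmod p$; (b) $(n+q)/2 < p \le n+q$, guaranteeing that $k := p-q$ is the only element of $[1,n]$ with $p \mid j^{\ell}+q^{\ell}$; and (c) $p > \max(\ell,q)$. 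By the prime number theorem for arithmetic progressions,
\[
\pi(n+q;\,\ell,2) \;-\; \pi\!\left(\tfrac{n+q}{2};\,\ell,2\right) \;\sim\; \frac{n+q}{2\,\phi(\ell)\,\log(n+q)} \;\longrightarrow\; \infty,
\]
so a prime $p$ satisfying (a)--(c) exists for all $n \ge N_{q,\ell}$ with a suitable $N_{q,\ell}$.

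To finish, I compute $v_p(k^{\ell}+q^{\ell})$ for $k = p-q$. Clearly $v_p(k+q)=v_p(p)=1$. Reducing $\Phi_\ell(k,q)$ modulo $p$ using $k \equiv -q$, each summand becomes $(-1)^{i}(-q)^{\ell-1-i}q^{i} = (-1)^{\ell-1} q^{\ell-1} = q^{\ell-1}$ (since $\ell$ is odd), so
\[
\Phi_\ell(k,q) \;\equiv\; \ell\, q^{\ell-1} \pmod{p}.
\]
Condition (c) gives $p \nmid \ell q^{\ell-1}$, hence $v_p(\Phi_\ell(k,q))=0$ and therefore $v_p(k^{\ell}+q^{\ell})=1$. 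Since $k=p-q$ is the unique factor of the product divisible by $p$, the $p$-adic valuation of the whole product equals $1$, contradicting powerfulness.

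The only delicate step is extracting a prime $p \equiv 2 \pmod \ell$ from the short window $((n+q)/2,\,n+q]$: Dirichlet's theorem alone guarantees merely infinitely many primes in this residue class, whereas we need at least one inside an interval of length proportional to $n$. The quantitative form (PNT for arithmetic progressions) supplies this, at the cost that the resulting threshold $N_{q,\ell}$ is existent but ineffective in general — matching the qualitative statement of the theorem.
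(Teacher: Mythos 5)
Your proof is correct and follows essentially the same route as the paper: both locate a prime $p\equiv 2\ (\mathrm{mod}\ \ell)$ in the window $(\tfrac{n+q}{2},\,n+q]$ with $p>\max(\ell,q)$, use $\gcd(\ell,p-1)=1$ to conclude that $k=p-q$ is the unique index contributing a factor of $p$, and show that this contribution is exactly $p^1$ (your hand computation $\Phi_\ell(k,q)\equiv \ell q^{\ell-1}\ (\mathrm{mod}\ p)$ is precisely the relevant case of the Lifting-the-Exponent lemma that the paper cites). Your closing caveat is also well taken: the paper invokes ``Dirichlet's theorem'' for the prime in the dyadic window, but, as you observe, what is really needed (and what the cited handbook supplies) is the quantitative prime number theorem for arithmetic progressions.
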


\section{Preliminary lemmas}

\begin{lemma}\label{lem1} Let $p$ be a prime and $q,\ell$ be positive integers with $2\nmid \ell$ and $\gcd(\ell,p-1)=1$.
Then the congruence equation $x^{\ell}+q^{\ell}\equiv
0~(\text{mod}~p)$ has only one solution $x\equiv
-q~(\text{mod}~p)$.
\end{lemma}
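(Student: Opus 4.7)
The plan is to verify that $x \equiv -q \pmod{p}$ is a solution and then argue uniqueness using the fact that the $\ell$th power map is a bijection on $(\mathbb{Z}/p\mathbb{Z})^{*}$ under the coprimality hypothesis.

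First I would observe that since $\ell$ is odd, $(-q)^{\ell} + q^{\ell} = -q^{\ell} + q^{\ell} = 0$, so $x \equiv -q \pmod{p}$ always satisfies the congruence. For uniqueness I would split into two cases according to whether $p \mid q$. If $p \mid q$, the congruence reduces to $x^{\ell} \equiv 0 \pmod{p}$, forcing $p \mid x$, i.e., $x \equiv 0 \equiv -q \pmod{p}$, giving the unique solution.

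If $p \nmid q$, then $-q$ is a unit modulo $p$, and I would rewrite the congruence as $x^{\ell} \equiv (-q)^{\ell} \pmod{p}$. Setting $y \equiv x \cdot (-q)^{-1} \pmod{p}$, this becomes $y^{\ell} \equiv 1 \pmod{p}$. Since $\gcd(\ell, p-1) = 1$, the map $y \mapsto y^{\ell}$ is a bijection on the cyclic group $(\mathbb{Z}/p\mathbb{Z})^{*}$ of order $p-1$ (one can choose integers $u,v$ with $u\ell + v(p-1) = 1$ so that $y \equiv y^{u\ell+v(p-1)} \equiv (y^{\ell})^{u} \pmod{p}$ by Fermat's little theorem, showing injectivity). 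Consequently $y \equiv 1 \pmod{p}$, i.e., $x \equiv -q \pmod{p}$.

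There is essentially no obstacle here; the only point requiring care is ensuring the coprimality argument works uniformly, including the degenerate cases $p = 2$ (where $p-1 = 1$ and the statement is trivial) and $p \mid q$ (handled separately above). The two short cases together yield the claim.
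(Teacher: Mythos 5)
Your proof is correct. Both arguments ultimately rest on the same key fact -- that $\gcd(\ell,p-1)=1$ forces the $\ell$-th power map to be injective on the units modulo $p$ -- but you reach it by a different and somewhat more elementary route. The paper fixes a primitive root $g$, writes $x\equiv g^t$, $q\equiv g^m$, and reduces the congruence to the linear equation $\ell t\equiv \frac{p-1}{2}+m\ell \pmod{p-1}$, which has a unique solution $t$ since $\gcd(\ell,p-1)=1$; uniqueness of $x$ follows, and oddness of $\ell$ identifies the solution as $-q$. You instead normalize to $y^{\ell}\equiv 1 \pmod p$ with $y\equiv x(-q)^{-1}$ and invert the exponent via B\'ezout, writing $u\ell+v(p-1)=1$ and applying Fermat's little theorem to get $y\equiv (y^{\ell})^{u}\equiv 1$. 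Your version avoids invoking the existence of primitive roots and, as you note, works uniformly at $p=2$, where the paper's use of $g^{(p-1)/2}\equiv -1 \pmod p$ tacitly assumes $p$ odd (harmless, since the case $p=2$ is trivial). Both proofs handle $p\mid q$ identically, and both correctly note that when $p\nmid q$ any solution $x$ must be a unit since $x^{\ell}\equiv(-q)^{\ell}$ is a unit. The trade-off is purely stylistic: the primitive-root computation makes the count of solutions transparent, while the B\'ezout argument is shorter and uses less machinery.
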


\begin{proof} If $p\mid q$, then the congruence equation has only one
solution $x\equiv 0\equiv -q~(\text{mod}~p)$, the result is true.
Now we assume $p\nmid q$. Let $g$ be a primitive root modulo $p$.
Then $g^{\frac{p-1}{2}}\equiv -1~(\text{mod}~p)$. Let $x\equiv
g^t~(\text{mod}~p)$, $q\equiv g^m~(\text{mod}~p)$, where $0\le
t,m\le p-2$. Then the congruence equation $x^{\ell}+q^{\ell}\equiv
0~(\text{mod}~p)$ is equivalent to $g^{t\ell}\equiv
g^{\frac{p-1}{2}+m\ell}~(\text{mod}~p)$, that is, $\ell t\equiv
\frac{p-1}{2}+m\ell~(\text{mod}~p-1)$. Since $(\ell,p-1)=1$, it
follows that $t$ has only one solution. Hence $x$ also has only
one solution. By $2\nmid \ell$, it is easy to see that $x\equiv
-q~(\text{mod}~p)$ is the only solution.
\end{proof}

\begin{corollary}\label{cor1} Let $q$ be a positive integer and $\ell=k^s$, where $k$ is an odd prime and $s$ is a positive integer.
If $p$ is a prime with $k\nmid p-1$, then the congruence equation
$x^{\ell}+q^{\ell}\equiv 0~(\text{mod}~p)$ has only one solution
$x\equiv -q~(\text{mod}~p)$.
\end{corollary}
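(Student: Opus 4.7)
The plan is to reduce Corollary~\ref{cor1} directly to Lemma~\ref{lem1}. Lemma~\ref{lem1} requires two hypotheses on the exponent $\ell$: it must be odd, and it must satisfy $\gcd(\ell,p-1)=1$. Both hypotheses will follow easily from the assumptions of the corollary.

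First, since $\ell = k^s$ with $k$ an odd prime, $\ell$ is a power of an odd prime and is therefore itself odd, so $2 \nmid \ell$. Next, the only prime divisor of $\ell = k^s$ is $k$. The hypothesis $k \nmid p-1$ therefore implies $\gcd(k^s, p-1) = 1$, that is, $\gcd(\ell, p-1) = 1$.

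Having verified both hypotheses, I would simply invoke Lemma~\ref{lem1} to conclude that the congruence $x^\ell + q^\ell \equiv 0 \pmod{p}$ has the unique solution $x \equiv -q \pmod{p}$.

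There is essentially no obstacle here; the corollary is a direct specialization of the lemma, and the only content is the elementary number-theoretic observation that a prime-power exponent $k^s$ is coprime to $p-1$ as soon as the underlying prime $k$ is coprime to $p-1$. The proof will therefore be only a few lines long.
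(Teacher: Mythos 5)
Your proof is correct and matches the paper's intent: the corollary is stated immediately after Lemma~\ref{lem1} with no separate proof precisely because it is the direct specialization you describe, where $\ell=k^s$ odd gives $2\nmid\ell$ and $k\nmid p-1$ gives $\gcd(k^s,p-1)=1$. Nothing is missing.
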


For a nonzero integer $m$ and a prime $p$, let $\nu_p(m)$ denote
the smallest nonnegative integer $k$ such that $p^k\mid m$ and
$p^{k+1}\nmid m$.

\begin{lemma}\label{lem2} Let $\ell=k^s$ be an odd prime power, $p$ be
a prime and $q$ be a positive integer such that $p>q$, $p\not=k$
and $k\nmid p-1$. If $p-q\le n\le 2p-q-1$, then the product
$(1^{\ell}+q^{\ell})(2^{\ell}+q^{\ell})\cdots (n^{\ell}+q^{\ell})$
is not a powerful number.
\end{lemma}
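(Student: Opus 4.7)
The plan is to exhibit a prime, namely $p$ itself, that divides the product to an odd power (in fact, exactly to the first power), which immediately forces the product not to be powerful. The strategy has two parts: count the indices $i \in [1,n]$ for which $p \mid i^\ell + q^\ell$, and then compute $\nu_p$ of the unique surviving factor.

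First I would apply Corollary~\ref{cor1}: since $\ell = k^s$ with $p \neq k$ and $k \nmid p-1$, the congruence $x^\ell + q^\ell \equiv 0 \pmod{p}$ has the unique solution $x \equiv -q \pmod{p}$. The indices $i \in [1,n]$ with $i \equiv -q \pmod{p}$ are $i = p-q,\ 2p-q,\ 3p-q,\ldots$. The hypothesis $p > q$ gives $p-q \geq 1$, and the upper bound $n \leq 2p-q-1$ excludes $i = 2p-q$; the lower bound $n \geq p-q$ ensures $i = p-q$ is in range. Thus $i = p-q$ is the unique factor in the product divisible by~$p$.

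Next I would compute $\nu_p\bigl((p-q)^\ell + q^\ell\bigr)$ by expanding $(p-q)^\ell$ modulo $p^2$ via the binomial theorem. Using that $\ell$ is odd,
\[
(p-q)^\ell + q^\ell \;\equiv\; (-q)^\ell + \ell p (-q)^{\ell-1} + q^\ell \;\equiv\; \ell\, p\, q^{\ell-1} \pmod{p^2}.
\]
Since $0 < q < p$ gives $p \nmid q$, and $\ell = k^s$ with $p \neq k$ gives $p \nmid \ell$, we conclude $p \nmid \ell q^{\ell-1}$ and hence $\nu_p\bigl((p-q)^\ell + q^\ell\bigr) = 1$.

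Combining the two parts, $\nu_p$ of the whole product equals $1$, so the product cannot be a powerful number. The only real obstacle is the binomial expansion computation and the careful sign bookkeeping (the use of $\ell$ odd, so $(-q)^\ell = -q^\ell$ and $(-q)^{\ell-1} = q^{\ell-1}$); alternatively, one could invoke the Lifting the Exponent lemma applied to $a = p-q$, $b = q$ with $a+b = p$, which yields $\nu_p(a^\ell + b^\ell) = \nu_p(a+b) + \nu_p(\ell) = 1 + 0 = 1$ directly. Either route completes the argument.
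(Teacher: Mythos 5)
Your proposal is correct and follows essentially the same route as the paper: use Corollary~\ref{cor1} to isolate $i=p-q$ as the only index in range contributing a factor of $p$, then show $\nu_p\bigl((p-q)^{\ell}+q^{\ell}\bigr)=1$. The paper merely asserts $p^2\nmid\bigl((p-q)^{\ell}+q^{\ell}\bigr)$ from $p>q$ and $p\neq k$, whereas you supply the binomial (or LTE) justification explicitly, which is a welcome addition rather than a divergence.
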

\begin{proof} By Corollary \ref{cor1}, the smallest two positive
integers $x$ satisfying $x^{\ell}+q^{\ell}\equiv 0~(\text{mod}~p)$
are $p-q$ and $2p-q$. Noting that $p>q$ and $p\not=k$, we have
$p^2\nmid ((p-q)^{\ell}+q^{\ell})$. Hence, if $p-q\le n\le
2p-q-1$, then $$\nu_p((1^{\ell}+q^{\ell})(2^{\ell}+q^{\ell})\cdots
(n^{\ell}+q^{\ell}))=\nu_p((p-q)^{\ell}+q^{\ell})=1,$$ and so the
product $(1^{\ell}+q^{\ell})(2^{\ell}+q^{\ell})\cdots
(n^{\ell}+q^{\ell})$ is not a powerful number.
\end{proof}

For any positive integers $m$ and $k$, let
$$P(m)=\{p:~p~\text{is a prime},~\frac{m+1}{2}<p\le m+1\},$$
$$P(m;k,1)=\{p:~p~\text{is a prime},~\frac{m+1}{2}<p\le
m+1,~p\equiv 1~(\text{mod}~k)\}.$$

\begin{lemma}(See \cite[Lemma 2.3]{zhang}.)\label{lem3} If $m\not=1,3,5$ or $9$, then $|P(m)|\ge 2.$
\end{lemma}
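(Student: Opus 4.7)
The plan is to rewrite $|P(m)|$ as a prime-counting difference and then apply a sharpened form of Bertrand's postulate. First I would note the identity $|P(m)|=\pi(m+1)-\pi((m+1)/2)$: the strict inequality $(m+1)/2<p$ in the definition of $P(m)$ excludes the left endpoint if and only if that endpoint is itself prime, which is precisely the contribution captured by $\pi((m+1)/2)$. So the lemma reduces to showing that the interval $((m+1)/2,\,m+1]$ contains at least two primes whenever $m\notin\{1,3,5,9\}$.

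For the asymptotic range I would invoke Ramanujan's extension of Bertrand's postulate in the quantitative form that the second Ramanujan prime equals $R_{2}=11$; equivalently, $\pi(x)-\pi(x/2)\ge 2$ holds for every real $x\ge 11$. Applying this with $x=m+1$ yields $|P(m)|\ge 2$ for every $m\ge 10$ in a single step. The residual values $m\le 9$ are then settled by direct enumeration: for $m\in\{2,4,6,7,8\}$ the interval $((m+1)/2,m+1]$ visibly contains at least two primes (for example $\{5,7\}$ when $m\in\{7,8\}$), whereas for $m\in\{1,3,5,9\}$ it contains only one prime ($2$, $3$, $5$, $7$ respectively), which is exactly why these four values must be excluded from the statement.

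The only nontrivial ingredient is the explicit form of Ramanujan's theorem; everything else is bookkeeping. If one preferred a fully self-contained argument, the Rosser--Schoenfeld bounds $x/\log x<\pi(x)<1.25506\,x/\log x$ (valid for $x\ge 17$) can be combined to extract $\pi(x)-\pi(x/2)\ge 2$ from an explicit threshold, at the price of a somewhat longer finite verification. I do not foresee any genuine obstacle: the core content is simply a density statement for primes in a dyadic interval, and the four exceptional values of $m$ are forced by the sparseness of $\pi$ below $11$.
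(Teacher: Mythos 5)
Your argument is correct. The identity $|P(m)|=\pi(m+1)-\pi\bigl((m+1)/2\bigr)$ is exactly the right reformulation, the appeal to the second Ramanujan prime $R_2=11$ (i.e.\ $\pi(x)-\pi(x/2)\ge 2$ for all $x\ge 11$) cleanly disposes of every $m\ge 10$, and your enumeration of the cases $m\le 9$ is accurate: the interval contains two primes for $m\in\{2,4,6,7,8\}$ and exactly one prime for $m\in\{1,3,5,9\}$, which shows the excluded set is sharp. Note, however, that the paper does not prove this lemma at all --- it imports it verbatim as Lemma~2.3 of Zhang and Wang \cite{zhang} --- so there is no in-paper proof to match; what you have produced is a self-contained substitute for that citation. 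The proof in the cited source likewise rests on an explicit quantitative strengthening of Bertrand's postulate together with a finite verification, so your route is structurally of the same kind, but packaging the analytic input as the single statement $R_2=11$ makes the asymptotic step a one-liner and pushes all remaining work into a very short hand check. Your fallback via the Rosser--Schoenfeld bounds would also work, at the cost of a larger explicit threshold and correspondingly longer finite verification. The only point to be careful about is that $R_2=11$ is itself a nontrivial theorem (due to Ramanujan), so if the goal were a fully elementary proof you would still owe that input; as a cited ingredient it is entirely standard and the lemma follows.
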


\begin{lemma}(See \cite[Lemma 2.4]{zhang}.)\label{lem4} If $m\ge 4k$, where $k$ is an odd prime
with $k\ge 5$, then $|P(m)|>|P(m;k,1)|.$
\end{lemma}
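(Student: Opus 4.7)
The plan is to prove $|P(m)| > |P(m;k,1)|$ by producing an explicit lower bound on $|P(m)|$ and a matching upper bound on $|P(m;k,1)|$ which, under the hypothesis $m \ge 4k$ with $k \ge 5$ an odd prime, together force the strict inequality. Equivalently, I want to exhibit at least one prime in $I_m := ((m+1)/2,\,m+1]$ that is not congruent to $1 \pmod k$.

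First I would bound $|P(m;k,1)|$ from above. Every prime counted by $P(m;k,1)$ is in particular an integer congruent to $1 \pmod k$ in $I_m$, so
\[
|P(m;k,1)| \;\le\; \left\lfloor \frac{(m+1)/2}{k}\right\rfloor + 1 \;\le\; \frac{m+1}{2k} + 1.
\]
At the boundary value $m = 4k$ this already gives $|P(m;k,1)| \le 3$. For $m$ much larger than $k$ this estimate loses a $1/\ln$ factor, so I would refine it using a Brun--Titchmarsh-type inequality of the form $\pi(x;k,1) \le \frac{2x}{(k-1)\ln(x/k)}$ for $x > k$, yielding $|P(m;k,1)| \le \frac{2(m+1)}{(k-1)\ln((m+1)/k)}$.

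Next I would bound $|P(m)|$ from below using explicit Rosser--Schoenfeld estimates: for $x \ge 17$, $x/\ln x \le \pi(x) \le 1.25506\,x/\ln x$. This yields
\[
|P(m)| \;=\; \pi(m+1) - \pi\!\left(\tfrac{m+1}{2}\right) \;\ge\; \frac{m+1}{\ln(m+1)} - \frac{1.25506\,(m+1)}{2\ln((m+1)/2)},
\]
which is of order $(m+1)/\ln(m+1)$ once $m$ exceeds a small explicit threshold, and Lemma~\ref{lem3} is available as a convenient floor in the smallest ranges.

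Finally I would combine the two bounds. Asymptotically, the ratio of the lower bound on $|P(m)|$ to the Brun--Titchmarsh upper bound on $|P(m;k,1)|$ tends to $k-1 \ge 4$, so the desired inequality is comfortable for large $m$. The main obstacle will be the intermediate regime $4k \le m \le Ck$ for some modest constant $C$, where both the Rosser--Schoenfeld lower bound and the trivial residue-class upper bound carry $O(1)$ slack; there the numerical constants have to be tracked carefully. I would handle this range by splitting into cases according to the size of $m/k$: when $m/k$ is small, use the trivial residue count together with a strengthening of Lemma~\ref{lem3} counting several primes in $I_m$ (Ramanujan-prime-type input); when $m/k$ is large, switch to the Brun--Titchmarsh bound. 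A finite numerical verification for the very smallest values of $m$ (say up to some moderate multiple of $k$, with $k\in\{5,7,11,\dots\}$ up to wherever the asymptotic bound bites) may be required to close any remaining gap.
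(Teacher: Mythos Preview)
The paper does not give a proof of this lemma at all; it is quoted from Zhang and Wang \cite[Lemma~2.4]{zhang} and used as a black box, so there is no in-paper argument to compare your proposal against.

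As to the proposal itself: the strategy---upper-bounding $|P(m;k,1)|$ by a trivial residue-class count or Brun--Titchmarsh, lower-bounding $|P(m)|$ via explicit Rosser--Schoenfeld/Dusart inequalities, and mopping up the border range with a finite check---is the standard route to inequalities of this shape and is certainly viable. Two remarks on the details. First, the intermediate zone $4k\le m\le Ck$ is where all the work lies; there your Rosser--Schoenfeld difference $\frac{m+1}{\ln(m+1)}-\frac{1.25506(m+1)}{2\ln((m+1)/2)}$ is quite weak, and the trivial bound $|P(m;k,1)|\le\lfloor (m+1)/(2k)\rfloor+1$ carries a troublesome $+1$. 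A cheap sharpening at the bottom end is to note that among $2k+1,3k+1,4k+1$ exactly one is divisible by $3$ (since $k\ge5$ is prime), so $|P(4k;k,1)|\le 2$, which reduces the burden on the prime-count side. Second, the ``finite verification'' you allude to is a two-parameter check in $(m,k)$; you should make explicit why it terminates, i.e.\ why once $k$ is larger than some absolute constant the Brun--Titchmarsh bound already beats your lower bound for all $m\ge 4k$, leaving only finitely many $k$ (and for each such $k$ an explicit range of $m$) to check by hand.
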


\begin{lemma}(See \cite[Lemma 2]{chen}.)\label{lem5} Let $m$ be an
integer with $m\ge 4$ and $m\not=9$. Then there is always an odd
prime $p\in P(m)$ with $p\equiv 2~(\text{mod}~3)$.
\end{lemma}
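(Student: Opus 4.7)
The plan is to show that the half-open interval $I_m := \bigl(\tfrac{m+1}{2},\, m+1\bigr]$ contains a prime $p \equiv 2 \pmod 3$ for every integer $m \ge 4$ with $m \ne 9$. Writing $\pi(x;3,2)$ for the count of primes $p \le x$ with $p \equiv 2 \pmod 3$, the conclusion is equivalent to $\pi(m+1;3,2) > \pi\bigl(\tfrac{m+1}{2};3,2\bigr)$. Note that any $p \in I_m$ automatically satisfies $p > 2$ under the hypothesis $m \ge 4$, so oddness is free.

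First I would dispose of small $m$ by direct inspection. For $m \in \{4,5,6,7,8\}$ the prime $5 \equiv 2 \pmod 3$ lies in $I_m$; for $m = 9$ the only prime in $(5,10]$ is $7 \equiv 1 \pmod 3$, which is precisely the excluded case; for $m = 10$ the prime $11$ lies in $I_{10}$; and so on. I would continue this enumeration for all $4 \le m \le M_0$, where $M_0$ is a threshold fixed in the next step.

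For $m \ge M_0$ I would apply an effective version of the prime number theorem for arithmetic progressions modulo $3$, for instance the Ramar\'e--Rumely explicit bounds, which give inequalities of the form
\[
\Bigl|\pi(x;3,2) - \tfrac{1}{2}\operatorname{li}(x)\Bigr| \le E(x)
\]
with an explicit error $E(x)$. Subtracting this inequality at $x = m+1$ and $x = (m+1)/2$ yields
\[
\pi(m+1;3,2) - \pi\bigl(\tfrac{m+1}{2};3,2\bigr) \;\ge\; \tfrac{1}{2}\Bigl(\operatorname{li}(m+1) - \operatorname{li}\bigl(\tfrac{m+1}{2}\bigr)\Bigr) - 2E(m+1),
\]
and a lower bound of the form $c\,x/\log x$ on $\operatorname{li}(x) - \operatorname{li}(x/2)$ then makes the right-hand side strictly positive once $m$ is sufficiently large.

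The main obstacle is the calibration of $M_0$: the explicit error $E(x)$ must be sharp enough at $x = M_0 + 1$ that the displayed difference is already positive, while the verification range $4 \le m \le M_0$ must remain short enough to be dispatched by hand or by a brief computer check. With Ramar\'e--Rumely-type estimates the trade-off is routine and produces an $M_0$ of manageable size; the genuine content is concentrated in pinning down the effective constants, rather than in any structural obstruction, which is why the single outlier $m = 9$ can already be detected by the initial enumeration.
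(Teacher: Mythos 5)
The paper does not actually prove this lemma: it is imported verbatim as Lemma 2 of the cited paper of Chen, Gong and Ren, so there is no in-paper argument to match yours against. Measured against the standard proof in that source, your route is correct in outline but genuinely different and somewhat heavier. The usual argument invokes Breusch's classical refinement of Bertrand's postulate (for every $x\ge 7$ each of the progressions $1,2 \pmod 3$ contains a prime in $(x,2x)$), applies it with $x=\frac{m+1}{2}$ for $m\ge 13$, and checks $4\le m\le 12$ by hand, which is where the single exception $m=9$ surfaces. You instead propose to rederive such a Bertrand-type statement from explicit error terms for $\pi(x;3,2)$ in the style of Ramar\'e--Rumely. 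That works: using, say, the bound $\bigl|\theta(x;3,\ell)-\tfrac{x}{2}\bigr|\le 2.072\sqrt{x}$ valid in the relevant range, the difference $\theta(m+1;3,2)-\theta\bigl(\tfrac{m+1}{2};3,2\bigr)$ is forced positive already for $m$ in the low hundreds, so the finite verification is short. What your approach buys is self-containedness at the level of modern explicit analytic estimates; what it costs is that, as written, the proof is not finished --- the ``calibration of $M_0$'' you defer is exactly the content of the lemma, and until you fix a concrete inequality, a concrete $M_0$, and actually run the enumeration up to $M_0$, you have a plan rather than a proof. Citing Breusch (or Erd\H{o}s's elementary treatment of primes in progressions in $(x,2x)$) collapses all of that into one reference plus a nine-case check, which is presumably why the original authors took that path.
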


The following lemma is a powerful lemma for solving exponential
Diophantine equations. It is pretty well-known in the Olympiad
folklore (see, e.g., \cite{LET}) though its origins are hard to
trace.

\begin{lemma}(Lifting The Exponent Lemma.)\label{lem6} Let $x,y$ be
two integers, $\ell$ be an odd positive integer, and $p$ be an odd
prime such that $p\mid x+y$ and none of $x$ and $y$ is divisible
by $p$. We have
$$\nu_p(x^{\ell}+y^{\ell})=\nu_p(x+y)+\nu_p(\ell).$$
\end{lemma}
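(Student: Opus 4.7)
My plan is to prove Lemma \ref{lem6} by the classical two-step reduction: handle separately (A) the case $\gcd(\ell,p)=1$ and (B) the case $\ell=p$, then combine them via the decomposition $\ell = p^k m$ with $\gcd(m,p)=1$. The key tool throughout is the factorization
$$x^\ell + y^\ell = (x+y)\, S_\ell(x,y), \qquad S_\ell(x,y) := \sum_{i=0}^{\ell-1} (-1)^i x^{\ell-1-i} y^i,$$
which is valid because $\ell$ is odd.

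For step (A), I would reduce $S_\ell(x,y)$ modulo $p$. Since $y \equiv -x \pmod p$, each summand collapses to $x^{\ell-1}$, so $S_\ell \equiv \ell x^{\ell-1} \pmod p$; the hypotheses $p\nmid \ell$ and $p\nmid x$ (the latter forced by $p\nmid y$ together with $y\equiv -x \pmod p$) give $p\nmid S_\ell$, whence $\nu_p(x^\ell+y^\ell)=\nu_p(x+y)$. For step (B), I would write $y=-x+tp^a$ with $a=\nu_p(x+y)\ge 1$ and $p\nmid t$, substitute into $S_p(x,y)$, expand each $y^i$ by the binomial theorem, swap the order of summation, and apply the hockey-stick identity $\sum_{i=j}^{p-1}\binom{i}{j} = \binom{p}{j+1}$ to obtain
$$S_p(x,y) = \sum_{j=0}^{p-1} (-1)^j \binom{p}{j+1} x^{p-1-j} (tp^a)^j.$$
The $j=0$ summand contributes exactly $p x^{p-1}$, while every term with $j\ge 1$ has $p$-adic valuation at least $2$: for $1\le j\le p-2$ the binomial $\binom{p}{j+1}$ carries a factor of $p$ and $(tp^a)^j$ supplies at least one more; for $j=p-1$ the factor $(tp^a)^{p-1}$ alone has valuation $a(p-1)\ge 2$. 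Hence $S_p \equiv p x^{p-1} \pmod{p^2}$, giving $\nu_p(S_p)=1$ and thus $\nu_p(x^p+y^p)=\nu_p(x+y)+1$.

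To combine the two cases, I would write $\ell=p^k m$ with $\gcd(m,p)=1$: step (A) applied to the odd exponent $m$ gives $\nu_p(x^m+y^m)=\nu_p(x+y)$, after which $k$ successive applications of step (B) to the nesting $(x^m)^{p^k}+(y^m)^{p^k} = \bigl((x^m)^{p^{k-1}}\bigr)^p + \bigl((y^m)^{p^{k-1}}\bigr)^p$ are each legal, because the previous conclusion keeps $p$ dividing the current base sum; they add exactly $k=\nu_p(\ell)$ to the valuation, proving the formula. The main technical obstacle is the bookkeeping in step (B), namely verifying that every $j\ge 1$ term really lies in $p^2\mathbb Z$; this is precisely where $p$ being odd is essential, since for $p=2$ the coefficient $\binom{2}{2}=1$ fails to contribute the extra factor of $p$ and the identity notoriously breaks down.
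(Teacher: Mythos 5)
Your proof is correct, but note that the paper itself gives no proof of Lemma \ref{lem6}: it states the Lifting The Exponent Lemma as folklore and defers to the cited reference \cite{LET}, so there is no in-paper argument to compare against. Your two-step reduction is the standard proof: step (A) correctly reduces $S_\ell(x,y)=\sum_{i=0}^{\ell-1}(-1)^i x^{\ell-1-i}y^i$ to $\ell x^{\ell-1}$ modulo $p$; step (B) is sound, including the two places that need care --- the hockey-stick collapse $\sum_{i=j}^{p-1}\binom{i}{j}=\binom{p}{j+1}$ and the $j=p-1$ term, whose valuation $a(p-1)\ge 2$ is exactly where oddness of $p$ enters; and the final induction via $\ell=p^k m$ legitimately re-applies step (B) because $p\nmid x^m$, $p\nmid y^m$, and the running valuation stays positive. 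No gaps.
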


\section{Proofs of Theorems 1 and 2}

\begin{proof}[Proof of Theorem \ref{mainthm}.] By Lemma
\ref{lem2}, it is enough to prove that there exists a prime $p>q$
with $p\not=k$ and $k\nmid p-1$ such that $p-q\le n\le 2p-q-1$. It
is easy to see that $p-q\le n\le 2p-q-1$ is equivalent to
$\frac{n+q}{2}<p\le n+q$. Since $n\ge q$, it follows that
$p>\frac{n+q}{2}\ge q$. Hence we need to prove that there exists a
prime $p\not=k$ with $p\not\equiv 1~(\text{mod}~k)$ such that
$\frac{n+q}{2}<p\le n+q$.

By $n\ge 11-q$, we have $n+q-1\ge 10$. Hence, by Lemma \ref{lem3},
we obtain \begin{eqnarray}\label{eq1}|P(n+q-1)|\ge
2.\end{eqnarray}

Suppose that $k=3$. Since $n+q-1\ge 10$, by Lemma \ref{lem5},
there exists an odd prime $p$ with $p\equiv 2~(\text{mod}~3)$ such
that $\frac{n+q}{2}<p\le n+q$. It is clear that $p\not=3$.

Now we assume $k\ge 5$.

Case 1. $n<2k-q+1$. If $p\in P(n+q-1;k,1)$, then $p\equiv
1~(\text{mod}~k)$ and $p\ge 2k+1>n+q$, a contradiction. Hence
$|P(n+q-1;k,1)|=0$ in this case. Therefore, by \eqref{eq1}, there
exists at least one prime $p\not=k$ with $p\not\equiv
1~(\text{mod}~k)$ such that $\frac{n+q}{2}<p\le n+q$.

Case 2. $2k-q+1\le n<4k-q+1$. Suppose that
$|P(n+q-1;k,1)|=|P(n+q-1)|$. Then $|P(n+q-1;k,1)|\ge 2$. Hence,
there exist two primes $p_1$ and $p_2$ satisfying $p_1<p_2\le
n+q<4k+1$ and $p_1\equiv p_2\equiv 1~(\text{mod}~k)$. It follows
that $p_1\ge 2k+1$ and $p_2\ge 4k+1$, a contradiction. Hence
$|P(n+q-1)|>|P(n+q-1;k,1)|$. Therefore, there exists a prime $p$
with $p\not\equiv 1~(\text{mod}~k)$ such that $\frac{n+q}{2}<p\le
n+q$. Clearly, $p>\frac{n+q}{2}\ge \frac{2k+1}{2}>k$.

Case 3. $n\ge 4k-q+1$. It follows that $n+q-1\ge 4k$. By Lemma
\ref{lem4}, there exists a prime $p$ with $p\not\equiv
1~(\text{mod}~k)$ such that $\frac{n+q}{2}<p\le n+q$. Clearly
$p>\frac{n+q}{2}\ge \frac{4k+1}{2}>k$.

By three cases above, there exists a prime $p\not=k$ with
$p\not\equiv 1~(\text{mod}~k)$ such that $\frac{n+q}{2}<p\le n+q$.

Therefore, the product
$(1^{\ell}+q^{\ell})(2^{\ell}+q^{\ell})\cdots (n^{\ell}+q^{\ell})$
is not a powerful number.
\end{proof}

\begin{proof}[Proof of Theorem \ref{mainthm2}.] By Dirichlet's
theorem on arithmetic progressions (see \cite[p. 285]{sandor}),
there exists an integer $N_{q,\ell}>q$ such that for any integer
$n\ge N_{q,\ell}$, there is an odd prime $p\in P(n+q-1)$ with
$p\equiv 2~(\text{mod}~\ell)$. Clearly, $\frac{n+q+1}{2}\le p\le
n+q$ and $\gcd(p-1,\ell)=1$. Suppose that the product
$(1^{\ell}+q^{\ell})(2^{\ell}+q^{\ell})\cdots (n^{\ell}+q^{\ell})$
is a powerful number. Noting that $\frac{n+q+1}{2}\le p\le n+q$
and $n\ge N_{q,\ell}>q$, we have $p\ge \frac{n+q+1}{2}\ge q+1$,
and so $\nu_p(\prod_{a=1}^n (a+q))=1$. Hence, by
$$\prod_{a=1}^n (a^{\ell}+q^{\ell})=\prod_{a=1}^n (a+q)\cdot
\prod_{a=1}^n \frac{a^{\ell}+q^{\ell}}{a+q},$$  it follows that
$p\mid \frac{a^{\ell}+q^{\ell}}{a+q}$ for some $1\le a\le n$.
Since $p\mid a^{\ell}+q^{\ell}$, $2\nmid \ell$ and
$\gcd(p-1,\ell)=1$, by Lemma \ref{lem1}, we have $p\mid a+q$. On
the other hand, by $p\equiv 2~(\text{mod}~\ell)$ and $p\ge q+1$,
we have $p\nmid \ell$ and $p\nmid q$, and so $p\nmid a$. Hence, by
Lemma \ref{lem6}, we have
$$\nu_p(a^{\ell}+q^{\ell})=\nu_p(a+q)+\nu_p(\ell)=\nu_p(a+q).$$
That is, $p\nmid \frac{a^{\ell}+q^{\ell}}{a+q}$, a contradiction.

This completes the proof of Theorem 2.

\end{proof}

\section{Acknowledgement} This work was supported by the National Natural Science Foundation
for Youth of China, Grant No. 11501299, the Natural Science
Foundation of Jiangsu Province, Grant Nos. BK20150889,~15KJB110014
and the Startup Foundation for Introducing Talent of NUIST, Grant
No. 2014r029.

\clearpage
\end{document}